\newtheorem{theorem}{Theorem}[section]
\newtheorem{prop}[theorem]{Proposition}
\newtheorem{cor}[theorem]{Corollary}
\newcommand{\qed}{\qquad$\Box$}
\newenvironment{proof}{\prepf\rm}{\endprepf}
\def\gsg{\Gamma_{gsg}}
\def\psg{\Gamma_{psg}}
\newcommand{\gr}{\mathop{\mathrm{girth}}}
\begin{document}
\title{Subgroup sum graphs of finite abelian groups}
\author{Peter J. Cameron\footnote{School of Mathematics and Statistics, University of St Andrews, North Haugh St Andrews, Fife, KY16 9SS, U.K., Email:~pjc20@st-andrews.ac.uk, ORCID:0000-0003-3130-9505},~
R. Raveendra Prathap\footnote{Research Scholar, Department of Mathematics, Manonmaniam Sundaranar University, Tirunelveli 627012, Tamil Nadu, India, Email:~rasu.raveendraprathap@gmail.com}\\
and T. Tamizh Chelvam\footnote{Department of Mathematics, Manonmaniam Sundaranar University, Tirunelveli 627012, Tamil Nadu, India, Email:~tamche59@gmail.com, ORCID:0000-0002-1878-7847}}
\date{}
\maketitle

\begin{abstract}
Let $G$ be a finite abelian group, written additively, and $H$ a subgroup of~$G$. The \emph{subgroup sum graph} $\Gamma_{G,H}$ is the graph with vertex set $G$, in which two distinct vertices $x$ and $y$ are joined if $x+y\in H\setminus\{0\}$. These graphs form a fairly large class of Cayley sum graphs. Among cases which have been considered previously are the \emph{prime sum graphs}, in the case where $H=pG$ for some prime number $p$. In this paper we present their structure and a detailed analysis of their properties. We also consider the simpler graph $\Gamma^+_{G,H}$, which we refer to as the \emph{extended subgroup sum graph}, in which $x$ and $y$ are joined if $x+y\in H$: the subgroup sum is obtained by removing from this graph the partial matching of edges having the form $\{x,-x\}$ when $2x\ne0$. We study perfectness, clique number and independence number, connectedness, diameter, spectrum, and domination number of these graphs and their complements. We interpret our general results in detail in the prime sum graphs.
\end{abstract}

\section{Introduction}
Cayley graphs are excellent models for interconnection networks. Hence, there are many investigations in connection with parallel processing and distributed computing. The definition of the Cayley graph was introduced by Arthur Cayley in $1878$ to explain the concept of abstract groups which are described by a set of generators.  Cayley graphs of finite cyclic groups are studied in the name of circulant graphs~\cite{MT, QH, MH, TTSR1, TTSRIG} and Cayley graphs of finite  groups are considered in ~\cite{MK, DEJ03, LAK99, LEE01, TTGKWC, TTSMU, TTSMU2, TTMS4}. Other graphs from finite groups are also studied in \cite{CS, IS, AV1, TM}.   Several authors studied Cayley graphs of finite abelian groups in~\cite{EP, WT, TTIR4, EY}.  The  generalized Cayley graphs of finite rings with respect to subsets are studied in~\cite{T20196, T20194}. 
 
The square element graph of rings was studied by Biswas, Sen Gupta and Sen~\cite{BR, RM, SS}, while the power graph of semigroups was studied in~\cite{RK}. The power graph of groups are studied through the orders of elements in a group in~\cite{CS, IS, AV1, TM}. Raveendra Prathap and Tamizh Chelvam~\cite{RPTT19, RPTT20} defined and studied about the square graph and cubic power graph of finite abelian groups. Let $G$ be a finite abelian group with identity element $0.$ The square  graph of $G$ denoted $\Gamma_{sq}(G)$ is an undirected simple graph with vertex set $G$ and two distinct vertices $a$ and $b$ are adjacent in $\Gamma_{sq}(G)$ if $a+b=2t$ for some $t\in G$ and $2t\neq 0.$ Having defined the square graph $\Gamma_{sq}(G)$ of $G,$ authors studied various properties of the complement of square graph in~\cite{RPTT19}. Subsequently another graph called the cubic power graph $\Gamma_{cpg}(G)$ is introduced and studied in~ \cite{RPTT20}. These graphs can be generalized, in the context of Cayley graphs of finite abelian groups, in parallel with the generalizations made in the case of finite rings~\cite{T20196, T20194}. For a fixed positive integer $n$, the generalized sum graph $\gsg(G)$ is the simple undirected graph with vertex set $G$ and two distinct vertices $x$ and $y$ are adjacent if $x+y\in S=\{nt\mid nt\neq 0, t\in G\}.$  One can see that when $n=2, \gsg(G)=\Gamma_{sq}(G)$ and when $n=3, \gsg(G)=\Gamma_{cpg}(G).$ Further note that when $n=1$ and  $S$ is a generating set for $G,$ $\gsg(G)$ is the Cayley graph Cay$(G,S)$~\cite{LAK99}.  When $n$ is a prime number $p,$ we call the generalized sum graph as the prime sum graph $\psg(G).$

In this paper, we extend the process of generalization, by defining the \textit{subset sum graph} $\Gamma_{G,H}$, where $G$ is a finite abelian group and $H$ a subgroup of $G$: the vertices are the elements of $G$, and $x$ and $y$ are joined if $x+y\in H\setminus\{0\}$. A closely related graph, which we call the \textit{extended subset sum graph} $\Gamma^+_{G,H}$ is defined similarly, but without the restriction $x+y\ne0$ for adjacency; it turns out to be easier to work with.

\section{Preliminaries} 
In this section, we recollect certain basic definitions and properties of graphs which are essential for further reference.  Throughout this paper, $\Gamma=(V,E)$ is a finite simple graph with vertex set $V$ and edge set $E.$  A graph $\Gamma$ is said to be \textit{connected} if there exists a path between every pair of distinct vertices in $\Gamma.$  A graph $\Gamma$ is said to be \textit{complete} if every pair of distinct vertices are adjacent through an edge and the complete graph on $n$ vertices is denoted by $K_n.$ The \textit{degree} of a vertex $v$ is the number of the edges in $\Gamma$ which are incident with $v.$ Note that degree of each vertex $v$ in $K_n$ is $n-1.$  The distance $d(u,v)$ between the vertices $u$ and $v$ in $\Gamma$ is the length of the shortest path between $u$ and $v.$ If no path exists between $u$ and $v$ in $\Gamma,$ then $d(u,v) = \infty.$ For a vertex $v\in V(\Gamma),$ the \textit{eccentricity~ $e(v)$} of $v$ is the maximum distance from $v$ to any other vertex in $V(\Gamma).$ That is, $e(v)=\max\{d(v,w):w \in V(\Gamma)\}.$ The \textit{radius} of $\Gamma$ is the minimum eccentricity among the vertices of $\Gamma$ and is denoted by $rad(\Gamma).$  i.e., $rad(\Gamma)=\min\{e(v):v \in V(G)\}.$ The \textit{diameter} of $\Gamma$ is the maximum eccentricity among the vertices of $\Gamma$ and is denoted by $diam(\Gamma).$ i.e., $diam(\Gamma)=\max\{e(v):v \in V(G)\}.$ The \textit{girth} of $\Gamma$ is the length of a shortest cycle in $\Gamma$ and is denoted by $\gr(\Gamma).$ 


A \textit{clique} of $\Gamma$ is a maximal complete subgraph of $\Gamma$ and the number of vertices in the largest clique of $\Gamma$ is called the \textit{clique number} of $\Gamma$ and is denoted by $\omega(\Gamma)$.

For a vertex $x\in V(G),$ $N(x)$ is the set of all vertices in $G$ which are adjacent to $x$ and $N[x]= N(x)\cup\{x\}.$ An \textit{independent set} is a set of vertices in a graph $\Gamma$, in which no two vertices are adjacent. The cardinality of a maximal independent set is called the \textit{independence number} and is denoted by $\beta(\Gamma)$. A (vertex) \textit{proper colouring} of $\Gamma$ is an assignment of colours from a set $C$ such that no two adjacent vertices receive same colour. If $|C| = k,$ we say that the corresponding colouring is a proper $k$-colouring. A graph is \textit{$k$-colourable} if it has a proper $k$-colouring. The \textit{chromatic number} of a graph $\Gamma$ is the least $k$ such that $\Gamma$ is $k$-colourable and is denoted by $\chi(\Gamma)$. The \textit{clique
cover number} $\theta(\Gamma)$ is the smallest number of complete subgraphs required to cover all the vertices of $\Gamma$. Note that the independence number and clique cover number of $\Gamma$ are just the clique number and chromatic number of the complementary graph $\overline{\Gamma}$.

A graph $\Gamma$ is \textit{perfect} if every induced subgraph of $\Gamma$ has clique number equal to chromatic number. The \textit{Weak Perfect Graph Theorem} of Lov\'asz~\cite{Lovasz} asserts that the complement of a perfect graph is also perfect; so every induced subgraph of a perfect graph has independence number equal to clique cover number. We also make use of the theorem of Dilworth~\cite{Dilworth} asserting that the comparability graph (or incomparability graph) of a partial order is perfect.

The \textit{open neighbourhood} $N_\Gamma(v)$ of the vertex $v$ in $\Gamma$ is the set of vertices adjacent to $v$, while the \textit{closed neighbourhood} of $v$ is $\{v\}\cup N_{\Gamma}(v)$. The \textit{domination number} of a graph is the least cardinality of a set of vertices for which the union of their closed neighbourhoods is the whole vertex set 

\section{Definition and basic properties}
In this section, we give formal definitions of our graphs, describe their structure in terms of the structure of $G$ and $H$, examine connectedness, diameter, girth, and self-centredness, and show that these graphs are perfect. Let $G$ be a finite abelian group, and $H$ a subgroup of $G$. We define the
\emph{extended subgroup sum graph} $\Gamma^+_{G,H}$ to have vertex set $G$ and edges $\{x,y\}$ whenever $x+y\in H$; and the \emph{subgroup sum graph} $\Gamma_{G,H}$ to have the same vertex set and edges $\{x,y\}$ whenever $x+y\in H\setminus\{0\}$.

We see that the generalized sum graph $\Gamma_{gsg}(G)$ previously mentioned is the subgroup sum graph $\Gamma_{G,tG}$, while for any prime $p$, the prime sum graph $\Gamma_{psg}(G)$ is the subgroup sum graph $\Gamma_{G,pG}$.

\medskip

The next result deals with the case where the subgroup $H$ is trivial (either $\{0\}$ or $G$).

\begin{theorem}\label{t:trivial} Let $G$ be a finite abelian group.
\begin{enumerate}
\item If $H=\{0\}$, then the subgroup sum graph $\Gamma_{G,H}$ is a null  graph on the vertex set $G$, while the extended subgroup sum graph $\Gamma^+_{G,H}$ is a partial matching where elements other than the identity and involutions are joined to their inverses. 
\item If $H=G$, then the extended subgroup sum graph $\Gamma^+_{G,H}$ is complete, and the subgroup sum graph $\Gamma_{G,H}$ is obtained by deleting a matching covering all vertices except the identity and involutions.
\end{enumerate}
\end{theorem}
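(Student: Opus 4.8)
The plan is to simply unwind the definitions in each of the two extreme cases, using throughout the general observation (recorded in the introduction) that $\Gamma_{G,H}$ is obtained from $\Gamma^+_{G,H}$ by deleting the partial matching consisting of the edges $\{x,-x\}$ for which $2x\ne0$. So in each case it suffices to describe $\Gamma^+_{G,H}$ and then remove that matching.

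For part (a), take $H=\{0\}$. For the subgroup sum graph we need $x+y\in H\setminus\{0\}=\emptyset$, so there are no edges and $\Gamma_{G,H}$ is null on vertex set $G$. For the extended graph, the condition $x+y\in H=\{0\}$ says $y=-x$; since the graph is simple, no vertex is joined to itself, so the only edges are $\{x,-x\}$ with $x\ne-x$, i.e.\ with $2x\ne0$. This is exactly a partial matching pairing each non-involution, non-identity element with its inverse, as claimed. (Consistently, deleting this same matching from $\Gamma^+_{G,\{0\}}$ returns the null graph $\Gamma_{G,\{0\}}$.)

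For part (b), take $H=G$. Now $x+y\in H=G$ holds for every pair of vertices, so $\Gamma^+_{G,G}=K_{|G|}$ is complete. Passing to the subgroup sum graph, the condition $x+y\in G\setminus\{0\}$ fails precisely when $y=-x$; hence $\Gamma_{G,G}$ is $K_{|G|}$ with the edges $\{x,-x\}$ (for $2x\ne0$) deleted, i.e.\ a complete graph minus a matching covering all vertices except the identity and the involutions. This is again exactly the "delete the partial matching" description, so everything is consistent.

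Since the argument is nothing more than reading off the adjacency condition and invoking the simplicity of the graph to exclude loops, there is no real obstacle; the only point requiring a word of care is the handling of involutions (and the identity), where the putative edge $\{x,-x\}$ would be a loop and so is absent — this is precisely why the matchings in both parts fail to cover those vertices.
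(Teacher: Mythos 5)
Your proposal is correct and follows essentially the same route as the paper: in both cases one simply unwinds the adjacency condition, noting that for $H=\{0\}$ the only possible edges of $\Gamma^+_{G,H}$ are $\{x,-x\}$ with $2x\ne0$ (and none for $\Gamma_{G,H}$), while for $H=G$ every pair is adjacent in $\Gamma^+_{G,G}$ and only the pairs $\{x,-x\}$ are lost in $\Gamma_{G,G}$. Your extra remark about loops being excluded at the identity and involutions is exactly the point the paper leaves implicit, so nothing further is needed.
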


\begin{proof}
If $H=\{0\},$ then the only edges in $\Gamma^+_{G,H}$ are those of the form $\{a,-a\}$ where $2a\neq 0$; there are no edges in $\Gamma_{G,H}$.

If $H=G,$ then every pair $\{a,b\}$ with $a\ne b$ is an edge of $\Gamma^+_{G,G}$, and all those with $b\ne -a$ are edges of $\Gamma_{G,H}$.\qed
\end{proof}

The graphs considered in Theorem~\ref{t:trivial} are not very  interesting, so where necessary below we assume that $1<|H|<|G|$.

For a prime $p$ and an abelian group $G,$ we have the following for the prime sum graph $\Gamma_{psg}(G).$
\begin{enumerate}
\item $pG=\{0\}$ if and only if $G$ is elementary abelian (a direct sum of cyclic groups $C_p$ of order $p$).
\item $pG=G$ if and only if $p$ does not divide $|G|$.
\end{enumerate}

Let $S(G)$ denote the set of solutions of $2x=0$ in a group $G$, and $s(G)=|S(G)|$. If $G=C_{2^{k_1}}\times\cdots\times C_{2^{k_r}}\times A$, where $|A|$ is odd and $k_1,\ldots,k_r>0$, then $s(G)=2^r$.
\medskip

The basic structure of these graphs $\Gamma^+_{G,H}$ and $\Gamma_{G,H}$ are given in the next result.

\begin{theorem}\label{t:main}
Let $H$ be a subgroup of the abelian group $G$, with $|H|=k$ and $|G/H|=m$.
\begin{enumerate}
\item The extended subgroup sum graph $\Gamma^+_{G,H}$ has $(m+s(G/H))/2$ connected components, of which $s(G/H)$ are complete graphs $K_k$ (whose vertex sets are the cosets of $H$ having order $1$ or $2$ in $G/H$), and $(m-s(G/H))/2$ are complete bipartite graphs $K_{k,k}$ (whose vertex sets are
the union of two cosets $H+a$ and $H-a$ for some $a\in G$ with $2a\notin H$). 
\item The subgroup sum graph $\Gamma_{G,H}$ is obtained from the extended subgroup sum graph $\Gamma^+_{G,H}$ by deleting a perfect matching from every component which is complete bipartite, and deleting a matching from a complete component on a coset $H+a$ covering all elements other than elements of $S(G)$ lying in this coset (if any).
\end{enumerate}
\end{theorem}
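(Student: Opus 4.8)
The plan is to reduce everything to the quotient group $G/H$. The crucial observation for part~(1) is that for $x,y\in G$ we have $x+y\in H$ if and only if $(x+H)+(y+H)=H$ in $G/H$, i.e.\ the image of $y$ in $G/H$ is the negative of the image of $x$. Hence adjacency in $\Gamma^+_{G,H}$ depends only on which cosets of $H$ contain the two vertices, and the whole structure is governed by the negation pairing on $G/H$. First I would fix a coset $H+a$ and split into two cases. If $2a\in H$, then for distinct $x,y\in H+a$ we get $x+y\in 2a+H=H$, so $H+a$ induces a complete graph $K_k$; moreover a vertex of $H+a$ is joined to a vertex of $H+b$ only when $a+b\in H$, which (since $2a\in H$) forces $H+b=H+a$, so $H+a$ is a whole connected component isomorphic to $K_k$. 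If $2a\notin H$, then $H+a$ is independent, every vertex of $H+a$ is joined to every vertex of $H-a$ and to nothing outside $(H+a)\cup(H-a)$, so these two cosets together form a component isomorphic to $K_{k,k}$. The cosets with $2a\in H$ are exactly the elements of order $1$ or $2$ in $G/H$, of which there are $s(G/H)$; the remaining $m-s(G/H)$ cosets pair off under negation into $(m-s(G/H))/2$ pairs. Counting components gives $s(G/H)+(m-s(G/H))/2=(m+s(G/H))/2$, as required.

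For part~(2), note that $\Gamma_{G,H}$ is obtained from $\Gamma^+_{G,H}$ by deleting exactly the edges $\{x,y\}$ with $x+y=0$; since the graph is simple, these are the pairs $\{x,-x\}$ with $2x\neq0$, which form a partial matching $M$ on $G$. It then remains to locate $M$ within the component decomposition of part~(1). If $x$ lies in a coset $H+a$ with $2a\notin H$, then $-x\in H-a\neq H+a$, so $\{x,-x\}$ is an edge of the bipartite component on $(H+a)\cup(H-a)$; as $x$ runs over $H+a$ the partner $-x$ runs bijectively over $H-a$, so the edges of $M$ lying in this component form a perfect matching of $K_{k,k}$. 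If $x$ lies in a coset $H+a$ with $2a\in H$, then $-x\in H-a=H+a$, so $\{x,-x\}$ lies inside the complete component $K_k$ on $H+a$; here $x\mapsto -x$ is an involution of $H+a$ whose fixed points are precisely the elements of $S(G)$ in that coset, so the edges of $M$ in this component form a matching covering every vertex except those (if any) belonging to $S(G)$. Assembling these observations gives the statement.

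I do not expect a serious obstacle: the argument is essentially a careful translation of the adjacency condition into $G/H$ together with routine bookkeeping. The only points needing a little care are the equivalence ``$2a\in H$ $\iff$ $H+a=H-a$ $\iff$ $H+a$ has order $1$ or $2$ in $G/H$'', which identifies the complete components, and the verification that inside a $K_k$ component the set $S(G)\cap(H+a)$ is exactly the fixed-point set of negation, so that the deleted edges do form a matching missing precisely those vertices. (It may also be worth noting, though not strictly needed, that $S(G)\cap(H+a)$ is either empty or a coset of $S(H)$, hence has size $0$ or $s(H)$.)
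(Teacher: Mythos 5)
Your proposal is correct and follows essentially the same route as the paper's proof: identifying the neighbourhood of a vertex in $H+a$ as the coset $H-a$, splitting into the cases $H+a=H-a$ (i.e.\ $2a\in H$) and $H+a\ne H-a$, and then observing that $\Gamma_{G,H}$ arises by deleting the partial matching of pairs $\{x,-x\}$ with $2x\ne0$, which is perfect on the bipartite components and misses exactly $S(G)\cap(H+a)$ on the complete ones. Your write-up is simply a more detailed version of the paper's brief argument, including the component count $(m+s(G/H))/2$ explicitly.
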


\begin{proof}
(a) The neighbours of $a$ in $\Gamma^+_{G,H}$ are the elements of the coset $H-a$ (possibly excluding $a$). If $H-a\ne H+a$, then we have a complete bipartite graph on these two cosets. If $H-a=H+a$, so that this coset has order $2$ in $G/H$, then we have a complete graph on this coset.
\medskip

(b) To obtain the subgroup sum graph, we must delete edges of the form $\{a,-a\}$ for which $a\ne -a$ (that is, $a$ is not the identity or an involution). 
\qed
\end{proof}

Theorem~\ref{t:main} gives a complete description of graphs $\Gamma^+_{G,H}$ and $\Gamma_{G,H},$ and enables us to determine their properties and parameters, as we do in the rest of the paper. First, though, we describe the components in a little more detail, and introduce three parameters we will use throughout the paper, counting three
different types of cosets of $H$ in $G$:
\begin{itemize}
\item \emph{Type 1}, cosets $H+a$ for which $2a\notin H$ (that is, cosets distinct from their inverses in $G/H$). For such cosets, $(H+a)\cup(H-a)$ is a connected component of both $\Gamma^+_{G,H}$ and $\Gamma_{G,H}$, being complete bipartite in the first and complete bipartite minus a perfect matching in the second.
\item \emph{Type 2}, cosets $H+a$ for which $2a\in H$ but $H+a$ does not contain a solution of $2x=0$. For these, $H+a$ is a connected component of both graphs, and is complete in the first and complete minus a perfect matching in the second. (This type can only occur if $|H|$ is even. We have to exclude
$|H|=2$ here since in that case $K_2$ minus a matching consists of two isolated vertices.)
\item \emph{Type 3}, cosets $H+a$ containing a solution of $2x=0$. For these,  $H+a$ is a connected component, and is complete in the first graph and complete minus a matching of size $(k-s(H))/2$ in the second. (This case always occurs, since the coset $H$ has this form. See below for the proof
that every such coset contains $s(H)$ solutions of $2x=0$.)
\end{itemize}

For $i=1,2,3$, we let $m_i$ be the number of cosets of Type~$i$, so that $m_1+m_2+m_3=m=|G/H|$, and $m_2+m_3=s(G/H)$.

We claim that the number of solutions of $2x=0$ in a coset is equal to $s(H)$ if the coset has Type~3, $0$ otherwise. This is clear for the coset $H$, so let $H+a$ be another coset. Clearly there are no solutions of $2x=0$ in the coset unless it has Type~3, so suppose this is the case. Then $K=H\cup(H+a)$ is a subgroup of $G$, and $S(K)$ is a subgroup of $G$ containing $S(H)$ as a subgroup of index $2$. Thus $s(K)=2s(H)$, so there are $s(H)$ elements in $H+a$ satisfying $2x=0$. In particular, we see that $m_3s(H)=s(G)$.

Thus, in $\Gamma^+_{G,H}$, there are $m_1/2$ components which are complete bipartite $K_{k,k}$, and $m_2+m_3$ components which are complete graphs $K_k$. In $\Gamma_{G,H}$, there are $m_1/2$ components which are a complete bipartite graph minus a perfect matching, $m_2$ components which are complete graphs $K_k$ minus a perfect matching, and $m_3$ components which are $K_k$ minus a matching of size
$(k-s(H))/2$.

In particular, we see that the numbers $m$, $s(G)$, $s(H)$ and $s(G/H)$ determine the number of cosets of each type.

\section{Connectedness, diameter, girth and perfectness}
We consider first a group of graph properties and parameters.

\begin{prop}\label{p:conn} Let $G$ be an abelian group and $H$ be a subgroup of $G.$
Suppose that $|G|>2$. Then the following are equivalent:
\begin{enumerate}
\item $\Gamma^+_{G,H}$ is connected;
\item $\Gamma_{G,H}$ is connected;
\item $H=G$.
\end{enumerate}
\end{prop}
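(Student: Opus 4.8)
The plan is to exploit the structural description in Theorem~\ref{t:main}, which already writes both graphs as disjoint unions of components indexed by (pairs of) cosets of $H$ in $G$. Since a graph is connected exactly when it has a single connected component, and Theorem~\ref{t:main}(a) says $\Gamma^+_{G,H}$ has $(m+s(G/H))/2$ components while $\Gamma_{G,H}$ has, by the Type~1/2/3 analysis, $m_1/2 + m_2 + m_3$ components (where $m = |G/H|$), the implications (c)$\Rightarrow$(a) and (c)$\Rightarrow$(b) are immediate: if $H = G$ then $m = 1$, so there is exactly one coset, it has Type~3 (it is $H$ itself), and each graph has a single component — indeed Theorem~\ref{t:trivial}(b) already describes them explicitly.

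For the converse directions, I would argue contrapositively: assume $H \ne G$, so $m = |G/H| \ge 2$, and show both graphs are disconnected. For $\Gamma^+_{G,H}$ this is clean: the number of components is $(m + s(G/H))/2 \ge (m+1)/2$ (since $s(G/H) \ge 1$, the identity coset always being a solution of $2x=0$ in $G/H$), and $(m+1)/2 > 1$ whenever $m \ge 2$. So (a)$\Rightarrow$(c). Since $\Gamma_{G,H}$ is a spanning subgraph of $\Gamma^+_{G,H}$ obtained by deleting edges, it has at least as many components, so it is also disconnected; hence (b)$\Rightarrow$(c). One subtlety worth spelling out: deleting edges can only increase the number of components, but we should make sure none of the small cases collapse a component into isolated vertices in a way that needs separate mention — it does not matter here, since disconnected stays disconnected, but the hypothesis $|G| > 2$ is what is really doing work in the companion results, not here; I would remark that for $\Gamma^+_{G,H}$ and $\Gamma_{G,H}$ the equivalence in fact holds for all $|G|$, the restriction $|G|>2$ being harmless.

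Actually, to keep the argument self-contained and avoid re-deriving component counts, I would phrase the two hard directions directly from Theorem~\ref{t:main}(a): if $H \ne G$ then $G$ has at least two cosets of $H$. Pick any coset $H+a$ with $a \notin H$. The neighbours of any vertex $b \in H+a$ in $\Gamma^+_{G,H}$ all lie in $H - b \subseteq -(H+a) \cup (H+a)$, hence in $(H+a) \cup (H-a)$, which is a proper subset of $G$ (it omits $H$, since $a \notin H$ forces $H + a \ne H$ and $H - a \ne H$). Thus no vertex of $H+a$ is connected to the identity $0$, so $\Gamma^+_{G,H}$ is disconnected; and the same neighbourhood containment holds a fortiori in the spanning subgraph $\Gamma_{G,H}$, so it too is disconnected. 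Combining with the trivial (c)$\Rightarrow$(a) and (c)$\Rightarrow$(b), all three statements are equivalent.

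The main obstacle is essentially bookkeeping rather than mathematics: one must be careful that ``$H \ne G$'' genuinely produces two cosets one of which misses the other (so that the identity is separated from some vertex), and that passing from $\Gamma^+_{G,H}$ to its edge-subgraph $\Gamma_{G,H}$ cannot accidentally reconnect anything — which is obvious, but should be stated. There is no delicate estimate or case analysis; the whole proposition is a corollary of the component structure already established, so the write-up should be short and cite Theorem~\ref{t:main} directly.
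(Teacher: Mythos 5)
Your proposal is correct and follows essentially the same route as the paper: count components via Theorem~\ref{t:main} (at least $(m+1)/2>1$ components when $H\ne G$), note that $\Gamma_{G,H}$ is a spanning subgraph of $\Gamma^+_{G,H}$ and so has at least as many components, and use the structure when $H=G$ for the easy direction. The extra direct coset argument and the remark about $|G|>2$ are fine but not a different method.
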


\begin{proof}
The number of components of $\Gamma^+_{G,H}$ is $(m+s(G/H))/2$. Since each of 
$m$ and $s(G/H)$ is at least $1$, the graph is connected if and only if both 
are $1$, which implies that $H=G$. The converse is clear from
Theorem~\ref{t:trivial}.

Since $\Gamma_{G,H}$ has at least as many components as $\Gamma^+_{G,H}$,
we see that if it is connected then $H=G$. Conversely, if $H=G$, then we could
only disconnect the graph by deleting a matching if $|G|=2$. \qed
\end{proof}

\begin{cor}
Suppose that $|G|>2$ and $H\ne G$. Then the complements of $\Gamma^+_{G,H}$
and $\Gamma_{G,H}$ are connected and have diameter at most~$2$, with equality
except for $\overline{\Gamma}_{G,\{0\}}$ (which is complete). If $k>2$, then
these complements have radius equal to diameter, and so are self-centred.
\end{cor}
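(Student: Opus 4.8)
The plan is to transfer everything to the complements via the structural description in Theorem~\ref{t:main}, using the elementary fact that the complement of a disjoint union of graphs is a join. Since $H\ne G$, Theorem~\ref{t:main} tells us that both $\Gamma^+_{G,H}$ and $\Gamma_{G,H}$ have at least two connected components (because $m\geq 2$, or $m=2$ forces $s(G/H)=2$ so there are still at least two components of type $K_k$); hence each complement is a join of at least two nonempty graphs, and a join of two or more graphs is always connected with diameter at most~$2$. So I would first record this observation to get connectedness and the diameter bound.

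Next I would pin down when the diameter is exactly~$1$, i.e.\ when the complement is complete. The complement of $\Gamma^+_{G,H}$ is complete precisely when $\Gamma^+_{G,H}$ has no edges, which by Theorem~\ref{t:trivial}(a) happens exactly when $H=\{0\}$ and moreover $G$ has no element with $2a\ne0$; but since $|G|>2$, such a $G$ would have to be elementary abelian of exponent~$2$, yet even there the partial matching in $\Gamma^+_{G,\{0\}}$ is empty, so in fact $\overline{\Gamma}^+_{G,\{0\}}=K_{|G|}$. For $H\ne\{0\}$, the graph $\Gamma^+_{G,H}$ contains a triangle (any coset of order~$1$ or~$2$ in $G/H$ gives a $K_k$ with $k\ge2$, and together with an adjacent vertex from a paired coset, or within the clique if $k\ge3$, one gets an edge), so its complement is not complete and the diameter is exactly~$2$. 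For $\Gamma_{G,H}$: when $H=\{0\}$ the graph is a partial matching, whose complement on $|G|>2$ vertices is easily seen to have diameter~$2$ (two matched vertices are at distance~$2$); and deleting a matching from $\Gamma^+_{G,H}$ only removes edges, so $\overline{\Gamma}_{G,H}\supseteq\overline{\Gamma}^+_{G,H}$ as spanning subgraphs, giving diameter~$2$ for all $H\ne\{0\}$ as well. Thus equality holds in all cases except $\overline{\Gamma}_{G,\{0\}}$, which is complete.

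For the radius and self-centredness claim when $k>2$, I would argue that under this hypothesis \emph{every} vertex of the complement has eccentricity~$2$, so $\mathrm{rad}=\mathrm{diam}=2$. Fix any vertex $v$; it lies in some coset $H+a$. In $\Gamma^+_{G,H}$ the closed neighbourhood of $v$ is contained in $(H+a)\cup(H-a)$, so in the complement $v$ is adjacent to every vertex outside these one or two cosets — and since $m\ge2$ with $k>2$ there is such a vertex, so $e(v)\le2$, and $e(v)\ge2$ because $v$ has a non-neighbour in the complement (namely any vertex it is joined to in $\Gamma^+_{G,H}$; such exists since $k>2$ guarantees $|H-a|\ge3$, so even after possibly excluding $v$ itself the coset $H-a$ supplies a $\Gamma^+_{G,H}$-neighbour). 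The same bookkeeping works for $\Gamma_{G,H}$, since deleting a matching leaves each vertex with a $\Gamma_{G,H}$-neighbour inside $(H+a)\cup(H-a)$ once $k>2$ (a matching removes at most one edge at $v$). Hence every vertex has eccentricity~$2$ in both complements, giving radius~$=$~diameter~$=2$.

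The only real subtlety — and the step I would be most careful about — is the edge case $k=2$, which is exactly why the hypothesis $k>2$ is imposed: when $|H|=2$, a type-2 coset contributes $K_2$ minus a perfect matching, i.e.\ two isolated vertices of $\Gamma_{G,H}$, and these become universal in the complement, so they have eccentricity~$1$ and the complement fails to be self-centred (its radius drops to~$1$ while the diameter stays~$2$). Keeping track of this, and of the symmetric role of the two or more cosets so that "there is always a vertex outside $(H+a)\cup(H-a)$" is justified from $m\geq 2$ together with $k>2$ (or from $m\geq 3$), is the main bookkeeping obstacle; everything else is immediate from Theorem~\ref{t:main} and the join description of complements.
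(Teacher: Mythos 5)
Your overall strategy---the complement of a disconnected graph is a join of its components, hence connected with diameter at most $2$, and for $k>2$ no vertex of $\Gamma^+_{G,H}$ or $\Gamma_{G,H}$ is isolated, so no vertex of the complement is universal and every eccentricity equals $2$---is essentially the paper's own (much terser) argument, and those parts are sound. The genuine problem sits in your second paragraph, where you settle ``equality except for $\overline{\Gamma}_{G,\{0\}}$''. You have swapped the two graphs in the case $H=\{0\}$: by Theorem~\ref{t:trivial}(a) it is $\Gamma^+_{G,\{0\}}$ that is the partial matching, while $\Gamma_{G,\{0\}}$ is the null graph, so $\overline{\Gamma}_{G,\{0\}}$ is complete of diameter $1$---precisely the stated exception---whereas your text asserts it has diameter $2$ (``two matched vertices are at distance~$2$''), contradicting your own concluding sentence.

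Moreover, your argument that $\overline{\Gamma}_{G,H}$ has diameter exactly $2$ for $H\ne\{0\}$ does not work as written: from $\overline{\Gamma}_{G,H}\supseteq\overline{\Gamma}^+_{G,H}$ you can only conclude that the diameter of $\overline{\Gamma}_{G,H}$ is at most that of $\overline{\Gamma}^+_{G,H}$, an upper bound you already have; since $\overline{\Gamma}_{G,H}$ has \emph{more} edges, the fact that $\overline{\Gamma}^+_{G,H}$ is not complete gives no information about whether $\overline{\Gamma}_{G,H}$ is complete. What is needed is that $\Gamma_{G,H}$ itself has at least one edge whenever $H\ne\{0\}$; this is easy but must be said: for any $h\in H\setminus\{0\}$ we have $0+h=h\in H\setminus\{0\}$, so $\{0,h\}$ is an edge of $\Gamma_{G,H}$, hence its complement is not complete and has diameter exactly $2$. (Similarly, in your treatment of $\Gamma^+_{G,H}$ the phrase ``contains a triangle'' should simply be ``contains an edge'', which is what your parenthetical actually produces and all that is required.) With these repairs the proof is correct and follows the same route as the paper; the radius/self-centredness discussion for $k>2$, including your remark that Type-2 cosets with $|H|=2$ produce isolated vertices and hence eccentricity-$1$ vertices in the complement, is in order.
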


\begin{proof}
It is clear that the diameter of complements of $\Gamma^+_{G,H}$ and $\Gamma_{G,H}$ is $2$. If a vertex $g$ has eccentricity~$1,$ then it is joined to all other vertices, that is, it is isolated in
$\Gamma^+_{G,H}$ or $\Gamma_{G,H}$ as appropriate. This can only occur if $k=|H|=2$. 
\qed
\end{proof}

\begin{cor}
\begin{enumerate}
\item The extended subgroup sum graph $\Gamma^+_{G,H}$ has girth $3$ if
$|H|>2$, $4$ if $|H|=2$ and $G/H$ is not an elementary abelian $2$-group, or
$\infty$ otherwise.
\item The subgroup sum graph $\Gamma(G,H)$ has girth $3$ if $|H|>3$,
$6$ if $|H|=3$, or $\infty$ if $|H|=2$.
\end{enumerate}
\end{cor}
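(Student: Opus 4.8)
The plan is to read the girth off from the description of the connected components given just after Theorem~\ref{t:main}, using the elementary fact that the girth of a graph is the minimum of the girths of its connected components, a forest contributing $\infty$. So I would first list the possible components of each graph together with their girths, and then combine.

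For part (a): by Theorem~\ref{t:main}(a) every component of $\Gamma^+_{G,H}$ is a complete graph $K_k$ (on a Type~2 or Type~3 coset) or a complete bipartite graph $K_{k,k}$ (on a Type~1 pair), and the coset $H$ always contributes a $K_k$. Since $K_k$ has girth $3$ for $k\ge 3$ and is acyclic for $k\le 2$, while $K_{k,k}$ has girth $4$ for $k\ge 2$ and is acyclic for $k=1$, I would conclude: if $k>2$, the component on $H$ already has girth $3$; if $k=2$, the only components are copies of $K_2$ and $4$-cycles $K_{2,2}$, so the girth is $4$ precisely when a Type~1 coset exists, i.e. $m_1>0$, i.e. $G/H$ has an element of order $>2$, i.e. $G/H$ is not an elementary abelian $2$-group, and is $\infty$ otherwise; if $k=1$ then $\Gamma^+_{G,H}$ is a partial matching by Theorem~\ref{t:trivial}, hence acyclic. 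Since a simple graph cannot have girth below $3$, each of these bounds is attained exactly.

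For part (b): by Theorem~\ref{t:main}(b) the components of $\Gamma_{G,H}$ are $K_{k,k}$ minus a perfect matching (Type~1 pairs), $K_k$ minus a perfect matching (Type~2, which forces $k$ even and $k\ge 4$), and $K_k$ minus a matching of size $(k-s(H))/2$ (Type~3), with $H$ always of Type~3. I would then establish a handful of routine facts: $K_k$ minus any matching contains a triangle once $k\ge 5$ (a vertex has at least $k-2\ge 3$ neighbours, and among any three of them at most one pair is non-adjacent, so two are joined); for $k=4$ the coset $H$ loses a matching of size $(4-s(H))/2\le 1$, because $s(H)\ge 2$ whenever $|H|$ is even, and $K_4$ minus at most one edge has a triangle; $K_{3,3}$ minus a perfect matching is a connected bipartite $2$-regular graph on six vertices, hence $C_6$; and for $k=2$ every possible component ($K_2$, $2K_2$ from $K_{2,2}$ minus a perfect matching, $K_2$ from the empty matching, $2K_1$ from $K_2$ minus an edge) is acyclic. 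Combining: $k>3$ gives girth $3$, from the triangle on the coset $H$; $k=3$ gives girth $6$, since then Type~2 does not occur and the only cycles live on Type~1 pairs and are $6$-cycles (here one should note that if there is no Type~1 coset at all the graph is a forest, so strictly the girth is $6$ or $\infty$); $k=2$ gives girth $\infty$.

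The only point needing genuine care, rather than bookkeeping, is the small-$k$ analysis in part (b): separating the cases where $K_k$ minus a matching has a triangle from the exceptional girth-$4$ configuration ($k=4$ with a perfect matching deleted, which happens on Type~2 cosets) and the girth-$6$ configuration ($k=3$). The crucial input is that the matching deleted from the coset $H$ has size $(k-s(H))/2$ with $s(H)\ge 2$ for $|H|$ even, so on $H$ one never deletes a perfect matching when $|H|=4$; and that $s(H)=1$ for $|H|$ odd, which is what produces the path $K_3$ minus an edge when $|H|=3$ and hence pushes the girth up to $6$.
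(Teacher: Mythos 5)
Your proof is correct and follows essentially the same route as the paper: read the girth off the component decomposition of Theorem~\ref{t:main}, with the coset $H$ supplying a triangle when $|H|>3$ (the matching deleted there is never perfect, since $s(H)\ge1$) and $K_{3,3}$ minus a perfect matching supplying the $6$-cycle when $|H|=3$. Your caveat in the case $|H|=3$ is in fact a genuine refinement: if no Type~1 coset exists (e.g.\ $G=C_6$ with $H$ the subgroup of order $3$, where both cosets are Type~3 and carry paths), the girth is $\infty$ rather than $6$, a case the paper's statement and proof pass over.
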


\begin{proof}
(a) There is always at least one component $K_k$, since $s(G/H)\ge1$; if
$k>2,$ this component contains a triangle. Otherwise, there is a component
$K_{k,k}$ unless $s(G/H)=m$.

\medskip

(b) If we delete a matching which is not perfect from a complete graph on at least four vertices, what is left contains a triangle. If $k=3,$ then $s(G)=s(G/H)$ and so there are no cycles of length $3$, but removing a perfect matching from $K_{3,3}$ produces a $6$-cycle. If $k=2,$ then the graph consists
of isolated vertices and edges.\qed
\end{proof}

\begin{cor}\label{c:perfect}
The subgroup sum graph and extended subgroup sum graph are perfect graphs.
\end{cor}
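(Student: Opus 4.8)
The plan is to exploit the explicit description of the components given in Theorem~\ref{t:main}, so that perfectness reduces to a handful of trivial cases. I would begin by recalling two standard facts. First, a finite graph is perfect if and only if each of its connected components is perfect: the clique number and the chromatic number of a disjoint union are the maxima of the corresponding parameters over the components, and an induced subgraph of a disjoint union is the disjoint union of induced subgraphs of the components, so the defining equality $\omega=\chi$ is inherited on all induced subgraphs. Second, every bipartite graph is perfect (on an induced subgraph with an edge, $\omega=\chi=2$, and otherwise $\omega=\chi=1$). Granting these, it suffices to verify that each connected component of $\Gamma^+_{G,H}$ and of $\Gamma_{G,H}$ is perfect.

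For the extended graph, Theorem~\ref{t:main}(a) tells us that every component is either a complete graph $K_k$ or a complete bipartite graph $K_{k,k}$. Complete graphs are perfect, and $K_{k,k}$ is bipartite, hence perfect; this disposes of $\Gamma^+_{G,H}$.

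For $\Gamma_{G,H}$, Theorem~\ref{t:main}(b) leaves exactly two kinds of component. A component obtained from $K_{k,k}$ by deleting a perfect matching is a spanning subgraph of $K_{k,k}$ and so is still bipartite, hence perfect. A component obtained from $K_k$ by deleting a matching (either a perfect matching, or a matching of size $(k-s(H))/2$) is precisely the complement, on the same $k$ vertices, of the graph whose edge set is that matching; the latter is a disjoint union of copies of $K_2$ and $K_1$, which is manifestly perfect, so by the Weak Perfect Graph Theorem of Lov\'asz~\cite{Lovasz} its complement is perfect. Assembling the cases via the reduction to components completes the argument for $\Gamma_{G,H}$ as well.

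I expect no serious obstacle here: once Theorem~\ref{t:main} is available the proof is essentially bookkeeping, and the only genuine ingredient is the Weak Perfect Graph Theorem, invoked to pass from ``$K_k$ minus a matching'' to its complement --- and even this could be replaced by an elementary check that such a graph contains no induced odd hole or odd antihole, or by exhibiting matching optimal cliques and colourings directly. The one point to watch is the degenerate small values of $k$ (for instance $k\le 2$, or $k=3$), where some ``components'' collapse to isolated vertices, edges, or $6$-cycles; but these were already isolated in the structural discussion preceding this corollary, and each such graph is trivially perfect, so they cause no difficulty.
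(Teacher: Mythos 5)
Your proof is correct, and its overall shape matches the paper's: reduce to connected components via Theorem~\ref{t:main}, handle complete and complete bipartite components directly, and treat $K_k$ minus a matching as the complement of a matching, invoking Lov\'asz's Weak Perfect Graph Theorem exactly as the paper does. The one place you genuinely diverge is the component $K_{k,k}$ minus a perfect matching: you simply observe that deleting edges from a bipartite graph leaves it bipartite, and bipartite graphs are perfect, whereas the paper realises this graph as the comparability graph of a poset (two antichains $\{a_1,\ldots,a_k\}$, $\{b_1,\ldots,b_k\}$ with $a_i<b_j$ iff $i\ne j$) and appeals to Dilworth's theorem that comparability graphs are perfect. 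Your route is the more elementary one, needing nothing beyond $\omega=\chi\le 2$ for bipartite graphs; the paper's route is heavier machinery for this case but showcases the comparability-graph structure of these components. Your closing remarks on degenerate small $k$ are fine and do not affect the argument, since the small graphs in question (isolated vertices, edges, $6$-cycles) are trivially perfect.
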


\begin{proof}
It suffices to show that each connected component is perfect. For the
extended graph, these components are complete or complete bipartite, which
are well known to be perfect. In the other case, we have to deal with the
complement or bipartite complement of a matching. In the first case, the
result holds since a matching is clearly perfect. The graph obtained by
removing a perfect matching from $K_{k,k}$ is the comparability graph of the
partial order on $\{a_1,\ldots,a_k,b_1,\ldots,b_k\}$ in which
$\{a_1,\ldots,a_k\}$ and $\{b_1,\ldots,b_k\}$ are antichains and $a_i<b_j$
if and only if $i\ne j$; and the comparability graph of a poset is perfect,
by Dilworth's Theorem.
\qed
\end{proof}

From the above theorem, we have the following corollary for the cubic power graph of a finite abelian group.
\begin{cor}{\normalfont \cite[Theorem 4.11]{RPTT20}}\label{cubic:perfect} Let $G$ be a finite abelian group. Then the cubic power graph $\Gamma_{cpg}(G)$ is perfect.
\end{cor}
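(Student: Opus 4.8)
The plan is to recognise the cubic power graph as a special case of the subgroup sum graph and then simply invoke Corollary~\ref{c:perfect}. Recall that $\Gamma_{cpg}(G)$ has vertex set $G$, with distinct vertices $a$ and $b$ adjacent precisely when $a+b=3t$ for some $t\in G$ with $3t\neq 0$. Since $x\mapsto 3x$ is an endomorphism of the abelian group $G$, its image $3G=\{3t:t\in G\}$ is a subgroup of $G$, and the condition ``$a+b=3t$ for some $t$ with $3t\neq 0$'' is exactly the condition $a+b\in 3G\setminus\{0\}$. Hence $\Gamma_{cpg}(G)=\Gamma_{G,3G}$, the subgroup sum graph associated with the subgroup $H=3G$; this matches the observation, made earlier in the paper, that the generalized sum graph with $n=3$ is the cubic power graph.

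Once this identification is in place the result is immediate: Corollary~\ref{c:perfect} asserts that every subgroup sum graph is perfect, so in particular $\Gamma_{G,3G}$ is perfect, which is the claim. The only thing that needs checking is that the two adjacency conditions genuinely coincide and that $3G$ is a subgroup, both of which are routine; there is no real obstacle here, since the substantive work has already been carried out in establishing the structure theorem (Theorem~\ref{t:main}) and deriving perfectness from it in Corollary~\ref{c:perfect}. Thus the corollary is purely a matter of specialisation, and one could equally note that it recovers \cite[Theorem~4.11]{RPTT20} as a consequence of the more general framework developed here.
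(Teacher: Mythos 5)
Your proposal is correct and follows exactly the paper's route: identify $\Gamma_{cpg}(G)$ with the subgroup sum graph $\Gamma_{G,3G}$ (as the paper notes when introducing the generalized sum graphs) and apply Corollary~\ref{c:perfect}. Nothing further is needed.
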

Also, we have the following corollary which is applicable for the complement of the square graph of a finite abelian group.
\begin{cor}{\normalfont \cite[Theorem 3.2]{RPTT19}}\label{csquare:perfect}
Let $G$ be a finite abelian group. Then the complement $\overline{\Gamma}_{sq}(G)$ is perfect.
\end{cor}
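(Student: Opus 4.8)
The plan is to recognise $\Gamma_{sq}(G)$ as an instance of the subgroup sum graph construction and then quote Corollary~\ref{c:perfect} together with the Weak Perfect Graph Theorem. First I would unwind the definition of the square graph: two distinct vertices $a$ and $b$ are adjacent in $\Gamma_{sq}(G)$ exactly when $a+b=2t$ for some $t\in G$ with $2t\neq0$, that is, exactly when $a+b\in 2G\setminus\{0\}$. Since $2G$ is a subgroup of $G$, this is precisely the adjacency rule of the subgroup sum graph $\Gamma_{G,2G}$; hence $\Gamma_{sq}(G)=\Gamma_{G,2G}$ (equivalently, the prime sum graph $\psg(G)$ for $p=2$), and therefore $\overline{\Gamma}_{sq}(G)=\overline{\Gamma}_{G,2G}$.

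With this identification the corollary is immediate. By Corollary~\ref{c:perfect} the subgroup sum graph $\Gamma_{G,2G}$ is perfect. By the Weak Perfect Graph Theorem of Lov\'asz~\cite{Lovasz}, recalled in the Preliminaries, the complement of a perfect graph is perfect; applying this to $\Gamma_{G,2G}$ yields that $\overline{\Gamma}_{sq}(G)=\overline{\Gamma}_{G,2G}$ is perfect.

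There is no real obstacle: the sole substantive step is the translation from the square-graph notation of~\cite{RPTT19} to the subgroup sum graph $\Gamma_{G,2G}$, after which the statement follows in one line from Corollary~\ref{c:perfect} and the Weak Perfect Graph Theorem. One could in principle bypass the Weak Perfect Graph Theorem by analysing $\overline{\Gamma}_{sq}(G)$ directly via the component description of Theorem~\ref{t:main} (its complement being a join of the complements of those components), but this is distinctly messier than quoting Lov\'asz's theorem, so I would not pursue it. The analogue for $\Gamma_{cpg}(G)=\Gamma_{G,3G}$ in Corollary~\ref{cubic:perfect} is obtained identically, simply replacing $2G$ by $3G$.
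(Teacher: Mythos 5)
Your proposal is correct and matches the paper's intended argument exactly: the paper identifies $\Gamma_{sq}(G)$ with the subgroup sum graph $\Gamma_{G,2G}$ (the case $n=2$ of the generalized sum graph), applies Corollary~\ref{c:perfect}, and invokes Lov\'asz's Weak Perfect Graph Theorem, stated in the Preliminaries for precisely this purpose, to pass to the complement. Nothing is missing.
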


\section{Cliques and cocliques}
We now compute the clique number and independence number of subgroup sum graphs $\Gamma_{G,H}$ where $H$ is a proper subgroup of $G.$

\subsection{General results}
We compute the clique number and independence number of the graphs $\Gamma_{G,H}$ where $H$ is a proper subgroup of $G.$ (By Corollary~\ref{c:perfect}, the chromatic number is equal to the clique number, and the clique cover number is equal to the independence number, so we get these two further parameters for free.)

\begin{theorem}\label{thm5.1}
Let $G$ be a finite abelian group, and $H$ a non-trivial subgroup of $G$.
Suppose that $|H|=k$, $|G/H|=m$, and let $m_1,m_2,m_3$ be as above.
\begin{enumerate}
\item The clique number of the extended subgroup sum graph $\Gamma^+_{G,H}$
is equal to $k$, and the independence number is equal to $km_1/2+m_2+m_3$.
\item The clique number of the subgroup sum graph $\Gamma_{G,H}$ is $(k+s(H))/2$.
\item The independence number of the subgroup sum graph $\Gamma_{G,H}$ is equal to
$km_1/2+2(m_2+m_3)$ if $s(H)<|H|$, and $km_1/2+2m_2+m_3$ if $s(H)=|H|$.
\end{enumerate}
\end{theorem}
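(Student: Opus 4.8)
The plan is to read all three parameters directly off the component decomposition supplied by Theorem~\ref{t:main} and the discussion following it, using the two elementary facts that the clique number of a disjoint union of graphs equals the largest clique number among the components, while the independence number equals the sum of the independence numbers of the components. Recall that $|H|=k\ge2$ since $H$ is non-trivial, that $\Gamma^+_{G,H}$ is the disjoint union of $m_1/2$ copies of $K_{k,k}$ and $m_2+m_3$ copies of $K_k$, and that $\Gamma_{G,H}$ is the disjoint union of $m_1/2$ copies of $K_{k,k}$ minus a perfect matching, $m_2$ copies of $K_k$ minus a perfect matching, and $m_3$ copies of $K_k$ minus a matching of size $(k-s(H))/2$.

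Part~(a) is then immediate. Among the components of $\Gamma^+_{G,H}$, a copy of $K_k$ has clique number $k$ and independence number $1$, while a copy of $K_{k,k}$ has clique number $2$ (it is bipartite and, as $k\ge2$, non-empty) and independence number $k$. Since $m_2+m_3=s(G/H)\ge1$, at least one $K_k$ occurs, so $\omega(\Gamma^+_{G,H})=k$; and summing independence numbers gives $\beta(\Gamma^+_{G,H})=km_1/2+(m_2+m_3)$.

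For parts~(b) and~(c) I would compute $\omega$ and $\beta$ for each of the three component types of $\Gamma_{G,H}$. The key observation is that, if $M$ is a matching of a graph, then the cliques of $K_k$ with the edges of $M$ deleted are precisely the independent sets of the graph $(V(K_k),M)$, and conversely the independent sets of $K_k$ with $M$ deleted are the cliques of $(V(K_k),M)$. Hence a largest clique of $K_k$ minus $M$ has size (number of $M$-unsaturated vertices) $+$ (number of edges of $M$), which is $s(H)+(k-s(H))/2=(k+s(H))/2$ for a Type~3 component and $0+k/2=k/2$ for a Type~2 component, while a largest independent set has size $2$ if $M$ is non-empty and $1$ if $M$ is empty. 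For the bipartite components $K_{k,k}$ minus a perfect matching one checks directly that the clique number is $2$ and the independence number is $k$ (take one complete side; it is independent and maximal). Combining: since $s(H)\ge1$ we have $(k+s(H))/2\ge k/2$, and since $k\ge2$ we have $k+s(H)\ge4$ and so $(k+s(H))/2\ge2$; as the coset $H$ itself is a Type~3 coset, a Type~3 component is always present, so the largest clique over all components is $(k+s(H))/2$, proving~(b). For~(c), each of the $m_1/2$ Type~1 components contributes $k$, each of the $m_2$ Type~2 components contributes $2$, and each of the $m_3$ Type~3 components contributes $2$ when $(k-s(H))/2>0$, i.e.\ $s(H)<|H|$, and $1$ when $s(H)=|H|$; summing yields $km_1/2+2(m_2+m_3)$ in the first case and $km_1/2+2m_2+m_3$ in the second.

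There is no serious obstacle here: once the component structure of Theorem~\ref{t:main} is available, everything reduces to the clique and independence numbers of complete graphs, complete bipartite graphs, and these two with a matching removed. The only points that need a little care are the inequalities $(k+s(H))/2\ge\max\{2,k/2\}$ that identify which component type realises the clique number, and the degenerate value $k=2$: there a Type~2 coset of $H$ splits into two isolated vertices of $\Gamma_{G,H}$ rather than forming a $K_2$ minus a perfect matching, but such a coset still contributes independence number $2$ and clique number $1$, and moreover $s(H)=|H|=2$, so the stated formulas are unaffected.
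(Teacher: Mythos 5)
Your proposal is correct and follows essentially the same route as the paper: both read the clique and independence numbers off the component decomposition of Theorem~\ref{t:main}, computing them for $K_{k,k}$ minus a perfect matching and for $K_k$ minus a (possibly partial) matching, and noting that the Type~3 coset $H$ always realises the maximum clique $(k+s(H))/2$. Your explicit count via ``unsaturated vertices plus matching edges'' and your check of the degenerate case $k=2$ are slightly more detailed than the paper's wording, but they are refinements of the same argument rather than a different approach.
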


\begin{proof}
(a) Clearly $H$ is a maximal clique, while a maximal independent set takes one
bipartite block from each complete bipartite component and one vertex from
each complete component. (By assumption, $k\ge2$, so the cliques of size $2$
in the complete bipartite graphs are not larger than $k$.)

\medskip

(b) When deleting edges from the extended subgroup sum graph, the bipartite
components remain bipartite, and so have clique number $2$.

In a complete component, corresponding to a coset of Type~2, we delete a
perfect matching, giving a graph with clique number $k/2$. If there is a coset
of Type~3, it contains a clique of size $(k+s(H))/2$. The second quantity is
larger, and there is always a Type~3 coset, namely $H$ itself. Now since
$k\ge 2$, we have $(k+s(H))/2\ge 2$.

\medskip

(c) It is easy to see that if we delete a perfect matching from $K_{k,k}$ with
$k\ge2$, the resulting graph still has independence number $k$.

A Type~2 coset carries a complete graph with a perfect matching removed, so
contains an independent set of size~$2$. Similarly, a Type~3 coset contains
an independent set of size~$2$ unless all its elements satisfy $2x=0$, in
which case it is complete and has independence number~$1$ (this occurs if and
only if $s(H)=|H|$). 

Summing over all cosets gives the result.
\qed
\end{proof}
Using the above Theorem~\ref{thm5.1}, one can obtain the clique number of the cubic power graph obtained in~\cite[Theorem 3.8]{RPTT20}. In fact, in the language of the cubic power graph $\Gamma_{cpg}(G)$ of an abelian group $G,$ $H=3G$ and $|H|=k=\frac{|G|}{3^r}.$ Hence, we have the following corollary.
\begin{cor}{\normalfont \cite[Theorem 3.8]{RPTT20}}\label{chro:cubic} Let $G$ be a finite abelian group. Then the chromatic number $\omega(\Gamma_{cpg}(G)=s(H)+\left\lceil\frac{\frac{|G|}{3^r}-s(H)}{2}\right\rceil=(k+s(H))/2.$
\end{cor}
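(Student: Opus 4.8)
The plan is to derive Corollary~\ref{chro:cubic} as an immediate specialisation of Theorem~\ref{thm5.1}(b). The cubic power graph $\Gamma_{cpg}(G)$ is by definition the subgroup sum graph $\Gamma_{G,H}$ with $H=3G$; writing $G$ (up to isomorphism) as a product whose Sylow $3$-subgroup has rank $r$, multiplication by $3$ is an endomorphism whose image $3G$ has index $3^r$, so $|H|=k=|G|/3^r$. Theorem~\ref{thm5.1}(b) then says outright that $\omega(\Gamma_{cpg}(G))=\omega(\Gamma_{G,H})=(k+s(H))/2$, and since $\Gamma_{cpg}(G)$ is perfect by Corollary~\ref{c:perfect} (equivalently Corollary~\ref{cubic:perfect}), the chromatic number equals the clique number, which is the statement to be proved.

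First I would record the identification $H=3G$ and justify $[G:3G]=3^r$: decompose $G$ as a direct sum of cyclic groups and note that multiplication by $3$ acts as an automorphism on each cyclic factor of order coprime to $3$ and has image of index $3$ on each factor of order a power of $3$ (there being $r$ such factors by definition of $r$). Hence $k=|H|=|G|/3^r$, matching the quantity in the statement. Then I would invoke Theorem~\ref{thm5.1}(b) to get $\omega(\Gamma_{G,H})=(k+s(H))/2$, and rewrite this as $s(H)+\big(k-s(H)\big)/2$. The only remaining point is to replace $\big(k-s(H)\big)/2$ by $\lceil(k-s(H))/2\rceil$: this is legitimate because $s(H)$ divides $|H|=k$ in a power-of-two way — indeed $S(H)$ is an elementary abelian $2$-subgroup, so $s(H)$ is a power of $2$ dividing $k$, forcing $k-s(H)$ to be even (when $k>s(H)$) or zero (when $k=s(H)$); in either case the ceiling changes nothing. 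Finally I would cite perfectness to pass from $\omega$ to $\chi$, so that $\chi(\Gamma_{cpg}(G))=\omega(\Gamma_{cpg}(G))=(k+s(H))/2$ as claimed.

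There is essentially no obstacle here: the corollary is a translation exercise, and the one substantive computational ingredient — the index $[G:3G]=3^r$ — is standard structure theory for finite abelian groups. The mild subtlety worth a sentence in the write-up is the parity remark above, ensuring the two forms $(k+s(H))/2$ and $s(H)+\lceil(k-s(H))/2\rceil$ genuinely coincide; I would phrase it as "since $s(H)$ is a power of $2$ dividing $k$, the number $k-s(H)$ is even, so the ceiling is superfluous." I would also remark for the reader's convenience that this recovers exactly \cite[Theorem 3.8]{RPTT20}, closing the loop with the earlier corollaries (Corollary~\ref{cubic:perfect} and Corollary~\ref{csquare:perfect}) that specialise the general theory of this paper to the previously studied square and cubic power graphs.

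\begin{proof}
The cubic power graph $\Gamma_{cpg}(G)$ is the subgroup sum graph $\Gamma_{G,H}$ with $H=3G$. Writing $G$ as a direct sum of cyclic groups and letting $r$ be the number of cyclic factors of order a power of~$3$, multiplication by~$3$ is an automorphism of each factor of order coprime to~$3$ and has image of index~$3$ in each of the remaining $r$ factors; hence $k=|H|=|G|/3^r$. By Theorem~\ref{thm5.1}(b), $\omega(\Gamma_{cpg}(G))=(k+s(H))/2=s(H)+(k-s(H))/2$. Since $S(H)$ is an elementary abelian $2$-group, $s(H)$ is a power of~$2$ dividing $k$, so $k-s(H)$ is even and $(k-s(H))/2=\lceil(k-s(H))/2\rceil$. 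Finally, $\Gamma_{cpg}(G)$ is perfect by Corollary~\ref{c:perfect}, so $\chi(\Gamma_{cpg}(G))=\omega(\Gamma_{cpg}(G))=s(H)+\lceil(k-s(H))/2\rceil=(k+s(H))/2$.
\qed
\end{proof}
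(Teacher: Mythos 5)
Your proposal is correct and follows essentially the same route as the paper, which obtains this corollary simply by identifying $H=3G$, noting $k=|H|=|G|/3^r$, and quoting Theorem~\ref{thm5.1}(b), with perfectness (Corollary~\ref{c:perfect}) giving $\chi=\omega$. One cosmetic point: the cleaner reason that $k-s(H)$ is even (so the ceiling is superfluous) is that the elements of $H$ outside $S(H)$ pair off as $\{x,-x\}$; the inference ``$s(H)$ is a power of $2$ dividing $k$, hence $k-s(H)$ is even'' is not literally valid on its own (it would not exclude $s(H)=1$ with $k$ even, a case that in fact cannot occur because an even-order subgroup contains an involution).
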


\subsection{Prime sum graphs}
Now we calculate these numbers for the prime sum graph of a finite abelian group. It is clear from our analysis that the prime $2$ behaves very differently from odd primes. We will deal with odd primes first. Note that, by our earlier assumptions, we can assume that $|G|$ is divisible by the prime $p$ in question, otherwise $pG=G$.

\subsubsection{Odd prime $p$}
Let $G$ be a finite abelian group. Then $H=pG$ is a subgroup of $G$ whose index is $p^r$, where $r$ is the number of cyclic summands of $p$-power order when $G$ is expressed as a direct sum of cyclic groups; the quotient $G/pG$ is an elementary abelian $p$-group. So $s(G/H)=1$. Also, all $2$-elements
of $G$ are contained in $pG$, so (in the notation used above) we have $m_2=0$, $m_3=1$, and $m_1=p^r-1$. Also, if $G$ has $q$ cyclic summands of $2$-power order, then $s(H)=2^q$, and $s(H)=|H|$ if and only if $G\cong C_2^q\times C_p^r$.

From our earlier results, we find that $\Gamma^+_{G,pG}$ has clique number
$k=|G|/p^r$ and independence number $k(p^r-1)/2+1$, while $\Gamma_{G,pG}$
has clique number $k/2+2^{q-1}$ and independence number
$k(p^r-1)/2+2$ unless $G\cong C_2^q\times C_p^r$, in which case the 
independence number is $k(p^r-1)/2+1$. (If $q=0$, then $|G|$ is odd, so $k$
is odd and the clique number is $(k+1)/2$.)

\subsubsection{The prime $2$}

We can write $G=A\times B$, where $A$ has odd order and $B$ has order a power
of $2$. Suppose that $B$ is the product of $r$ cyclic groups, of which $q$
have order $2$. Then $2G=A\times2B$, where $B$ is the product of $r-q$ cyclic
groups, We have $G/2G$ elementary abelian of order $2^r$; so every coset
$2G+x$ satisfies $2x\in2G$, and there are $2^q$ cosets containing
involutions. Thus $m_1=0$, $m_2=2^r-2^q$, and $m_3=2^q$. Moreover,
$S(2G)$ is an elementary abelian of order $2^{r-q}$, so $S(2G)=2G$ if and only if
$G$ is the product of cyclic groups of orders $2$ and $4$ only.

So $\Gamma^+(G,2G)$ has clique number $k=|2G|=|G|/2^r$, and independence number
$2^r$; and $\Gamma(G,2G)$ has clique number $(k+2^{r-q})/2$, and independence
number $2^{r+1}$ unless $G$ is a product of cyclic groups of orders $2$
and~$4$, in which case the independence number is $2^{r+1}-2^q$.

\section{Spectrum}

Since the graphs $\Gamma^+_{G,H}$ and $\Gamma_{G,H}$ are disconnected if
$H<G,$ we can compute the spectrum by considering the components separately.
Theorem~\ref{t:main} gives the number of components of each type. As before
we let $|H|=k$ and $|G:H|=m$.

For $\Gamma^+_{G,H}$, the situation is simple: the components are either
complete bipartite $K_{k,k}$ (with eigenvalues $k$ and $-k$ each with
multiplicity~$1$, and $0$ with multiplicity $2k-2$) or complete $K_k$ (with
eigenvalues $k-1$ with multiplicity~$1$ and $-1$ with multiplicity $k-1$).

For $\Gamma_{G,H}$ things are a little more complicated. There are three types
of components:
\begin{enumerate}
\item $K_{k,k}$ minus a perfect matching. This is a distance-regular graph;
its eigenvalues are $k-1$ and $-(k-1)$, each with multiplicity~$1$, and
$1$ and $-1$, each with multiplicity $k-1$.
\item $K_k$ minus a perfect matching.
\item $K_k$ minus a partial matching covering $k-s(H)$ vertices.
\end{enumerate}

Consider the graph $K_{a+b}$, with $a$ even, minus a matching covering $a$
vertices. (We assume that $a>0$ and $a+b\ge4$ to avoid trivial cases.)
This graph occurs in various contexts. For example, it is a Tur\'an
graph, maximizing the number of edges in a graph containing no complete 
subgraph of order $(a/2)+b+1$. More relevant here, it is a \textit{generalized
line graph} in the sense of Hoffman; if $a>2$, its smallest eigenvalue is $-2$,
with multiplicity $(a/2)-1$ (see \cite{cgss}). For $a,b>0$, its eigenvalues
are as follows:
\begin{enumerate}
\item the roots of the quadratic $x^2-(a+b-3)x-(a+2b-2)$, each with
multiplicity $1$;
\item $0$, with multiplicity $a/2$;
\item $-2$, with multiplicity $(a/2)-1$;
\item $-1$, with multiplicity $b-1$.
\end{enumerate}

These (and the corresponding eigenvectors) can be calculated, using the fact that the partition into the vertices covered and uncovered by the matching is equitable, in the sense of Godsil and Royle~\cite{gr}. In detail: let the adjacency matrix have the form $A(\Gamma)=\displaystyle{\begin{pmatrix}A&J\\J&B\end{pmatrix}}$, where $A$ is the adjacency matrix of $K_a$ minus a matching, $B$ the adjacency matrix of $K_b$, and $J$ an all -$1$ matrix of appropriate size. Then the subspace of $\mathbb{R}^{a+b}$ spanned by $(1^a,0^b)$ and $(0^a,1^b)$ is $A(\Gamma)$-invariant, and the restriction of $(\Gamma)$ to this subspace is $\displaystyle{\begin{pmatrix}a-2&b\\a&b-1\\\end{pmatrix}}$, with eigenvalues as in (a). The orthogonal subspace is spanned by three types of vectors: those with zero entries in the last $b$ coordinates, and where entries in positions corresponding to the ends of (deleted) matching edges are negatives of each other; those with zero entries in the last $b$ coordinates, and with entries
in positions corresponding to the ends of matching edges are equal and all entries sum to $0$; and those with zero entries in the first $a$ coordinates, and with the remaining entries summing to zero. These are seen to be eigenvectors with the eigenvalues and multiplicities given in (b)--(d).

\section{Domination}
The \textit{domination number} of a graph is the least cardinality of a set of vertices for which the union of their closed neighbourhoods is the whole vertex set. The domination number of $K_k$ is clearly $1$, and that of $K_{k,k}$ is $2$. So the domination number of $\Gamma^+_{G,H}$ is $m=|G:H|$. Similarly the domination number of $K_{k,k}$ minus a perfect matching is $2$ (two vertices on one of the deleted edges form a dominating set), while the domination number of $K_k$ minus a matching is $1$ if the matching is not a perfect matching and $2$ if it is. So the domination number of $\Gamma_{G,H}$
is $m_1+2m_2+m_3=|G:H|+m_2$.

The complement of a disconnected graph $\Gamma$ with no isolated vertices has domination number $2$: two vertices in different components of $\Gamma$ form a dominating set in $\overline{\Gamma}$. By Proposition~\ref{p:conn}, provided that $H<G$, the domination numbers of $\overline{\Gamma}^+_{G,H}$ and
$\overline{\Gamma}_{G,H}$ are both $2$.

\section{Reconstructing the group}
In studies of graphs defined on groups, one topic which has been considered is the extent to which the graph determines the group. For example, Solomon and Woldar~\cite{SW} showed that the \emph{commuting graph} (with $x$ joined to $y$ if $xy=yx$) of a finite simple group determines the group.

In our situation, things are a bit different: the graphs only determine certain parameters of the pair $(G,H)$. We note that, in either case, the number of vertices of the graph is equal to $|G|$. The connected components have sizes $k$ and $2k$, and the number $k$ occurs at least once (for the subgroup $H$); so we can also determine $|H|$.

Suppose that we are given $\Gamma^+_{G,H}$. Then the number of components of size $k$ is equal to $s(G/H)$. So this is all the information we get: two pairs $(G_1,H_1)$ and $(G_2,H_2)$ with $s(G_1/H_1)=s(G_2/H_2)$ have isomorphic subgroup sum graphs.

For the graph $\Gamma_{G,H}$, we obtain the above information, and also the numbers $s(H)$ and $s(G)$. For the components of size $k$ are either complete minus a perfect matching of size $k/2$ ($m_2$ of these) or complete minus a matching of size $(k-s(H))/2$ ($m_3$ of these). Since $s(H)\ne0$, we can
distinguish the two types, and hence find $m_2$ and $m_3$; and from a Type~3 coset we can recover $s(H)$, and hence $s(G)=m_3s(H)$.

\section{A generalization}
Let $H$ and $K$ be subgroups of the abelian group $G$ with $H\neq K.$ We can extend our previous definition by defining the \textit{generalized subgroup sum graph} $\Gamma_{G;H,K}$ to be the graph with vertex set $G$ in which $x$ and $y$ are joined if $x+y\in H$ but $x+y\notin K$.

We lose no generality by assuming that $K\le H$, since with the definition just given, $\Gamma_{G;H,K}=\Gamma_{G;H,H\cap K}$. We shall always make this assumption. Now our previous subgroup sum graph $\Gamma_{G,H}$ is $\Gamma_{G;H,\{0\}}$. Moreover, $\Gamma_{G;G,H}$ is the complement of the extended subgroup sum graph $\Gamma^+_{G,H}$. We have not investigated these graphs further.

\section*{Acknowledgments} The research work T. Tamizh Chelvam is supported by CSIR Emeritus Scientist Scheme (No.21 (1123)/20/EMR-II) of  Council of Scientific and Industrial Research, Government of India.

\end{document}